\newtheorem{thm}{Theorem}[section]
\newtheorem{lem}[thm]{Lemma}
\theoremstyle{definition}
\newtheorem{dfn}[thm]{Definition}
\numberwithin{equation}{section}
\newcommand{\cHil}{\mathcal{H}}
\newcommand{\cK}{\mathcal{K}}
\newcommand{\cM}{\mathcal{M}}
\newcommand{\cH}{\mathcal{H}}
\newcommand{\sfG}{\mathsf{G}}
\newcommand{\cI}{\mathcal{I}}
\newcommand{\cB}{\mathcal{B}}
\newcommand{\bc} {\Bbb C}
\newcommand{\ot}{\otimes}
\newcommand{\cR}{\mathcal{R}}
\newcommand{\bM}{\mathsf{M}}
\newcommand{\bA}{\mathsf{A}}
\newcommand{\boldv}{{\bf v}}
\newcommand{\boldw}{{\bf w}}
\newcommand{\cW}{\mathcal{W}}
\newcommand{\cWmin}{\mathcal{W}_{{\rm min}}}
\newcommand{\cWred}{\mathcal{W}_{{\rm red} }}
\newcommand{\Link}{\textsf{Link}}
\newcommand{\Star}{\textsf{Star}}
\newcommand{\cQ}{\mathcal{Q}}
\newcommand{\cE}{\mathcal{E}}
\newcommand{\cP}{\mathcal{P}}
\newcommand{\bN}{\mathsf{N}}
\newcommand{\cU}{\mathcal{U}}
\newcommand{\cV}{\mathcal{V}}
\newcommand{\bP}{\mathsf{P}}
\title[Connes embeddability of graph products]{Connes embeddability of graph products}
\date{\today}
\author{Martijn Caspers}
 \address{M. Caspers, Fachbereich Mathematik und Informatik der Universit\"at M\"unster,
Einsteinstrasse 62,
48149 M\"unster, Germany}
\email{martijn.caspers@uni-muenster.de}
\thanks{MC is supported by the grant SFB 878 ``{\it Groups, geometry and actions}''.\\
MSC2010: 47C15, 46L09, 46L54.
}
\begin{document}

\maketitle

\begin{abstract}
We prove that the Connes embedding problem is stable under graph products.
\end{abstract}

Graph products form a group theoretical construction generalizing free products by adding commutation relations that are dictated by a graph. The construction was first considered by Green in her thesis \cite{Green} and important examples of graph products arise as right angled Coxeter groups and right angled Artin groups. The formal definition is as follows.

\begin{dfn}
Let $\Gamma$ be a graph with vertex set $V\Gamma$ and edge set $E \Gamma$. We may assume that $\Gamma$ has no double edges and no loops, i.e. $(v,v) \not \in E\Gamma$.
 For $v \in V\Gamma$ let $\sfG_v$ be discrete group. Let $\sfG_\Gamma$ be the graph product group which is the discrete group freely generated by $\sfG_v, v \in V\Gamma$ subject to the relation $sts^{-1}t^{-1} = 1$ whenever $s \in \sfG_v$ and $t \in \sfG_w$ with $(v,w) \in E\Gamma$.
\end{dfn}

  Many stability properties of graph products have recently been found by various authors. For example: soficity \cite{CioHolRee2}, Haagerup property \cite{AntDre} (or \cite{CasFim}), residual finiteness \cite{Green}, rapid
decay \cite{CioHolRee}, linearity \cite{HsuWis} and many other properties, see e.g. \cite{HerMei}, \cite{AntMin}, \cite{Chi}.

Also in \cite{CasFim} the operator algebraic graph product was defined. The latter associates to a graph of von Neumann algebras $\bM_v, v \in V\Gamma$ equipped with faithful normal states a new von Neumann algebra $\bM_\Gamma$. The construction -- generalizing free products -- naturally has the property that in case  each $\bM_v$ is the group von Neumann algebra of a discrete group $\sfG_v$ then $\bM_\Gamma$ is isomorphic to the group von Neumann algebra of $\sfG_\Gamma$. In this paper we show that Connes embeddability  for von Neumann algebras is preserved by graph products (see Section \ref{Sect=Prelim} for further definitions).

Previous results concerning stability of the Connes embedding problem for {\it free} products were found by Popa \cite{Popa} and Isono--Houdayer \cite{HouIso}. In \cite{DykemaBrownJung} Brown, Dykema and Jung also found the free entropy dimension of free products. Here we  take the approach based on moment formulae that were given by Speicher in \cite{SpeicherLetter} (see also \cite{BozSpe}) partly inspired by  Junge \cite{Jun} and Nou  \cite{Nou}  -- also proving the result for free products.

\vspace{0.3cm}

\noindent {\bf Structure.} In Section \ref{Sect=Prelim} we recall preliminaries on ultraproducts of von Neumann algebras and graph products. Section \ref{Sect=Speicher} contains combinatorial arguments to show that graph products of the von Neumann algebras of Voiculescu's free Gaussian functor \cite{Voic} are embeddable. Section \ref{Sect=GraphEmbed} proves that graph products preserve the Connes embedding problem.

\subsection*{Acknowledgements} The author thanks Marius Junge for useful discussions and the referee for useful comments and reference \cite{SpeicherLetter}.

\section{Preliminaries}\label{Sect=Prelim}

\subsection{Von Neumann algebras}
For background on von Neumann algebras we refer to Takesaki's book \cite{Tak}. We may and will assume that every von Neumann algebra is represented on its standard GNS space. We shall assume that every von Neumann algebra is separable. Let $(\cM_i, \varphi_i)_{i \in I}$ be an indexed set of von Neumann algebras $\cM_i$ with faithful normal states $\varphi_i$. Let $\cU$ be a free ultrafilter on $\cI$. Let $\prod_{i, \cU} \cM_i$ be the Raynaud ultraproduct of von Neumann algebras which can canonically be identified with $(\prod_{i, \cU} (\cM_i)_\ast)^\ast$, see \cite{Ray} for details. As in \cite{Ray} we shall write $(x_i)^\bullet$ for a bounded sequence $(x_i)_{i \in I}$ with $x_i \in \cM_i$ identified within $\prod_{i, \cU} \cM_i$. Such sequences form a $\sigma$-weakly dense $\ast$-subalgebra of $\prod_{i, \cU} \cM_i$. We use analogous notation for an ultraproduct of states. Let $e$ be the support projection of the ultraproduct state $(\varphi_i)^\bullet$. The ultraproduct von Neumann algebra $\prod_{i, \cU} [\cM_i, \varphi_i]$ is defined as the corner algebra $e (\prod_{i, \cU} \cM_i) e$. In case each $(\cM_i, \varphi_i)$ is the hyperfinite II$_1$ factor $\cR$ equipped with its unique normal faithful tracial state $\tau$ we set for the resulting algebra $\mathcal{R}^\cU := \prod_{i, \cU} [\cR, \tau]$. We can now state the following conjecture/problem due to A. Connes.

\vspace{0.3cm}

\noindent {\bf Connes embedding problem:} Every separable II$_1$ factor $\cM$ embeds into $\cR^\cU$ for some free ultrafilter $\cU$.

\vspace{0.3cm}

\noindent Here an embedding means that there exists an injective normal $\ast$-homomorphism $\cM \hookrightarrow \cR^\cU$.
As the Connes embedding problem is open we shall say that a von Neumann algebra $\cM$ is {\it Connes embeddable} if an embedding into $\mathcal{R}^\cU$ exists. We refer to \cite{BrOz}, \cite{LupVal}, \cite{Oza} for more background and to \cite{BraColVer} for recent examples of  Connes embeddable von Neumann algebras.

\vspace{0.3cm}

\noindent Let again $(\cM_i, \varphi_i)_{i \in I}$ be an indexed set of von Neumann algebras $\cM_i$ with faithful normal states $\varphi_i$. Assume that each $\varphi_i$ is tracial. Let $\cU$ be a free ultrafilter on $I$. Let $\mathcal{I}_\cU$ be the $\sigma$-weakly closed ideal in $\prod_{i, \cU} \cM_i$ generated by all elements $(x_i)^\bullet$ satisfying $\lim_{i, \cU} \tau(x_i^\ast x_i) = 0$. The Ocneanu ultraproduct $\cM^\cU$ is defined as $\left( \prod_{i, \cU} \cM_i \right) \slash \mathcal{I}_\cU$. The Ocneanu ultraproduct $\cM^\cU$ is isomorphic to the ultraproduct $\prod_{i, \cU} [\cM_i, \varphi_i]$, see \cite[Corollary 3.28]{AndoHaagerup}.

\subsection{Graph products} We refer to \cite{Green} and \cite{CasFim} for the following results.
Let $\Gamma$ be a simplicial graph with vertex set $V\Gamma$ and edge set $E\Gamma$. Simplicial means that $\Gamma$ does not have double edges and for every $v \in V\Gamma$ we have $(v,v) \not \in E\Gamma$. We presume $\Gamma$ is non-oriented so that $(v,w) \in E\Gamma$ whenever $(w,v) \in E\Gamma$. In this paper we shall assume that $V\Gamma$ is countable so that the graph product of separable von Neumann algebras is again separable.   For $v \in V\Gamma$ we set $\Link(v) = \{ w \in V\Gamma \mid (w,v) \in E\Gamma\}$. We set $\Star(v) = \Link(v) \cup \{ v \}$.

 A word is a string of vertices and a word $\boldw = w_1 \ldots w_n$ is called {\it reduced} if the following property holds:
 if $w_i =  w_j$ with $i < j$ then there exists a $i < k < j$ such that $w_k \not \in \Link(w_i)$.
  We let $\cWred$ be the reduced words.
 We say that two words $\boldv$ and $\boldw$ are \textit{equivalent} if they are equivalent modulo the equivalence relation generated by the two relations:

\begin{equation}\label{Eqn=Equivalences}
\begin{split}
{\rm I}\quad(v_1, \ldots, v_i, v_{i+1}, \ldots, v_n) \simeq (v_1, \ldots, v_i, v_{i+2}, \ldots, v_n) \qquad& {\rm if} \quad  v_i = v_{i+1},\\
{\rm II}\quad(v_1, \ldots, v_i, v_{i+1}, \ldots, v_n) \simeq (v_1, \ldots, v_{i+1}, v_{i}, \ldots, v_n) \qquad& {\rm if} \quad v_i \in \Link(v_{i+1}).
\end{split}
\end{equation}
Moreover, we say that two words $\boldv$ and $\boldw$ are \textit{type ${\rm I}$ equivalent} if they are equivalent modulo the sub-equivalence relation generated by relation ${\rm I}$. We define the notion \textit{type ${\rm II}$ equivalent} in the analogous way.

Every word is equivalent to a reduced word \cite[Lemma 1.3]{CasFim}. Out of every equivalence class of words we may therefore pick a single distinguished reduced word, which we call minimal. We let $\cWmin$ be the set of minimal words and $\cWred$ be the set of reduced words.

\vspace{0.3cm}

\noindent Let $\bM_v, v \in V\Gamma$ be von Neumann algebras with normal faithful states $\varphi_v$. We set $\bM_v^\circ$ for the kernel of $\varphi_v$.
We  define the graph product von Neumann algebra in an explicit way, see \cite[Section 2]{CasFim}. Let $\cH_v$ be the Hilbert space on which $\bM_v$ acts and let $\xi_v \in \cH_v$ be a distinguished unit vector such that $\varphi_v(\: \cdot \:) = \langle \: \cdot \: \xi_v, \xi_v \rangle$. We let $\cH_v^\circ$ be the subspace of $\cH_v$ consisting of vectors orthogonal to $\xi_v$.  For a word $\boldv = v_1 \ldots v_n$ we set $\cHil_\boldv = \cH_{v_1}^\circ \otimes \ldots \otimes \cH_{v_n}^\circ$. By Lemma \cite[Lemma 1.3]{CasFim}  we see that if $\boldv \in \cWred$ is equivalent to $\boldw \in \cWred$ then there exists a uniquely determined unitary map,
\begin{equation}\label{Eqn=QMap}
\cQ_{\boldv, \boldw}: \cHil_{\boldv} \rightarrow \cHil_{\boldw}: \xi_{v_1} \otimes  \ldots \otimes \xi_{v_n} \mapsto
\xi_{v_{\sigma(1)}} \otimes  \ldots \otimes \xi_{v_{\sigma(n)}},
\end{equation}
where $\sigma$ is as in \cite[Lemma 1.3 (4)]{CasFim}.
Since every $\boldv \in \cWred$ has a unique minimal form $\boldv'$ we may simply write $\cQ_{\boldv}$ for $\cQ_{\boldv, \boldv'}$. %In fact we shall write $\cQ$ for $\cQ_{\boldv, \boldv'}$ in case it is clear on which space it acts.

\vspace{0.2cm}
\noindent Define the \textit{graph product Hilbert space} $(\mathcal{H},\Omega)$ by:
$$\mathcal{H}=\bc\Omega\oplus\bigoplus_{\boldw \in \cW_{\text{min}}}\mathcal{H}_{\boldw}.$$
Here $\Omega$ is a distinguished unit vector. The graph product state $\varphi_\Gamma$ is the vector state given by $\Omega$.
For $v\in V\Gamma$, let $\cW_v$ be the set of minimal reduced words $\boldw$ such that the concatenation $v\boldw$ is still reduced and write $\cW_v^c=\cW_{\text{min}}\setminus\cW_v$. Define
$$\mathcal{H}(v)=\bc\Omega\oplus\bigoplus_{\boldw\in\cW_v}\mathcal{H}_\boldw.$$
We define the isometry $U_v\,:\,\cH_v\ot\cH(v)\rightarrow\cH$ in the following way:
$$\begin{array}{llcl}
U_v\,:\,&\cH_v\ot\cH(v) &\longrightarrow& \cH\\
&\xi_v\ot\Omega &\overset{\simeq}{\longrightarrow}&\Omega\\
&\cH_v^{\circ}\ot\Omega&\overset{\simeq}{\longrightarrow}&\cH_v^{\circ}\\
&\xi_v\ot\cH_\boldw&\overset{\simeq}{\longrightarrow}&\cH_\boldw\\
&\cH_v^{\circ}\ot\cH_\boldw&\overset{\simeq}{\longrightarrow}&\cQ_{v\boldw}(\cH_v^{\circ}\ot\cH_\boldw)
\end{array}$$
Here the actions are understood naturally. Observe that, for any reduced word $\boldw$, the word $v\boldw$ is not reduced if and only if $\boldw$ is equivalent to a reduced word that starts with $v$. It follows that $U_v$ is surjective, hence unitary.
Define, for $v\in V\Gamma$, the faithful unital normal $*$-homomorphism $\lambda_v\,:\,\mathcal{B}(\cH_v)\rightarrow\mathcal{B}(\cH)$ by
$$\lambda_v(x)=U_v(x\ot 1)U_v^*\quad\text{for all}\quad x\in\cB(\cH_v).$$
Observe the $\lambda_v$ intertwines the vector states $\omega_{\xi_v}$ and $\omega_{\Omega}$. The graph product $\bM_\Gamma$ is defined as the von Neumann algebra generated by $\cup_{v \in V\Gamma} \lambda_v(\bM_v)$. We shall identify $\bM_v$ as a von Neumann subalgebra of $\bM_\Gamma$ and omit $\lambda_v$ in the notation.
If each $\bM_v, v \in V\Gamma$ is a II$_1$-factor, then so is $\bM_\Gamma$, see \cite[Corollary 2.29]{CasFim}. We shall use the fact that $\bM_v$ and $\bM_w$ commute whenever $(v,w) \in E\Gamma$ without further reference, see \cite[Section 2]{CasFim}. Finally, we mention that graph products satisfy a universal property for which we refer to \cite[Proposition 2.22]{CasFim}.

\section{Graph products and Voiculescu's free Gaussian functor}\label{Sect=Speicher}

In this section we consider graph products of Voiculescu's factors \cite{Voic} (which are in fact free group factors) and find a moment formula.

\subsection{Preliminaries on partitions}
We let $\cP(1, \ldots, n)$ be the set of all partitions of the set $\{ 1, \ldots, n\}$. We let $\cP_2(1, \ldots, n)$ be the set of all pair partitions, meaning that every equivalence class consists exactly of 2 elements. In particular the latter set is empty if $n$ is odd.  Let $\cV \in \cP_2(1, \ldots, n)$ and write $\cV = \{ (e_1, z_1), \ldots, (e_r, z_r) \}, 2r=n, e_i < z_i$. Let $I(\cV)$ be the set of all pairs $(k,l)$ such that $e_k < e_l < z_k < z_l$. Let $\Gamma$ be a simplicial graph and let $\boldv$ be a (not nessecarily reduced) word of length $n$. We let $\cP(\boldv)$ be the set of partitions $\cV$ of $(1, \ldots, n)$ with the property that if $k$ and $l$ are equivalent in $\cV$ then this implies that $v_k = v_l$.
 We let $\cP_2(\boldv)$ be the subset of $\cP(\boldv)$ of pair partitions. For $\cV \in \cP_2(\boldv)$ we define $I_{\Gamma}(\cV)$  as the subset of $I(\cV)$ consisting of all pairs $(k,l)$ such that $e_k < e_l < z_k < z_l$ and $(v_{e_k}, v_{e_l}) \not \in E\Gamma$. A pair partition $\cV$ is called admissible if $I(\cV)$ is empty and it is called $\Gamma$-admissible if $I_{\Gamma}(\cV)$ is empty. We denote $\cP_{2, {\rm nc}, \Gamma}(\boldv)$ for the subset of $\cP_{2}(\boldv)$  consisting of all $\Gamma$-admissible partitions (these can be considered as ``non-crossing partitions up to permutations coming from edges of $\Gamma$''). Let $\cV \in \cP(\boldv)$ we say that a sequence of indices $(i_1, \ldots, i_n)$ is of class $\cV$ if the following property holds: $i_k = i_l$ and $v_k = v_l$ if and only if $k$ and $l$ are equivalent in $\cV$.

\subsection{Voiculescu's free Gaussian functor, graph products and a moment formula}\label{Sect=Gaussian}
We recall the construction of the factors constructed in \cite{Voic}.
 Let $\cH$ be $\mathbb{C}^2$ be the two dimensional Hilbert space with orthonormal basis vectors $f_1$ and $f_2$. Let $\cK = \oplus_{n=1}^\infty \cH^{\otimes n} \oplus \mathbb{C} \cdot \Omega$ where $\Omega$ is a distinguished (vacuum) unit vector. For $f \in \cH$ let $a^\ast(f)$ be the creation operator and $a(f)$ be the annihilation operator:
\begin{equation}\label{Eqn=CreationOperator}
\begin{array}{rlll}
a^\ast(f) \xi & = & f \otimes \xi, & \xi \in \cH^{\otimes n},\\
a(f) \Omega & = & 0, &\\
a(f) g \otimes \xi & = & \langle g, f \rangle \xi, & \xi \in \cH^{\otimes n}.
\end{array}
\end{equation}
Set,
\begin{equation}\label{Eqn=GaussianGenerators}
g_1 = a^\ast(f_1) + a(f_1), \qquad g_2 = a^\ast(f_2) + a(f_2).
\end{equation}
Let $\bP$ be the von Neumann algebra generated by $g_1$ and $g_2$. In fact    $\bP$ is equal to the free product of the von Neumann algebras generated by $g_1$ and $g_2$. Let $\Gamma$ be a simplicial graph. We equip the notions introduced so far in this subsection with a $v \in V\Gamma$ to distinguish them for different vertices. For every $v \in V\Gamma$ let $\bP_v := \bP$ and let $\cK_v := \cK$ be the Hilbert space it acts on. For $\bP_v$ we denote $g_{1,v}$ and $g_{2,v}$ for the generators defined in \eqref{Eqn=GaussianGenerators}. We set $a_{i,v}^\ast$ for $a^\ast(f_{i,v})$ as in \eqref{Eqn=CreationOperator}.
Let $\bP_\Gamma$ be the graph product von Neumann algebra of the $\bP_v, v \in V\Gamma$ with graph product state $\tau_\Gamma$.

\begin{thm}\label{Thm=GaussianMoments}
Let $d_{j} \in \{ g_{i,v} \mid v \in V\Gamma, i \in \{1,2\} \}, 1 \leq j \leq n$. Then,
\begin{itemize}
\item\label{Item=TraceI} $\tau_\Gamma( d_1 d_2 \ldots d_n)$ is 0 in case $n$ is odd.
\item Let $v_j$ be such that $d_j = g_{i,v_j}$ for either $i=1$ or $i=2$ and suppose that $n$ is even, say $n=2r$. Then,
\[
\tau_\Gamma( d_1 d_2 \ldots d_n) =
\# \cP_{2, {\rm nc}, \Gamma}(\boldv).
\]
\end{itemize}
\end{thm}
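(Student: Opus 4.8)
The plan is to compute the moment directly as the vacuum amplitude $\tau_\Gamma(d_1\cdots d_n) = \langle d_1\cdots d_n\,\Omega,\Omega\rangle$ on the graph product Hilbert space $\cH$, by splitting each generator into a creation and an annihilation part and tracking how these act. For $v\in V\Gamma$ and $i\in\{1,2\}$ I would write $L_{i,v} := \lambda_v(a^\ast(f_{i,v}))$, so that $\lambda_{v}(g_{i,v}) = L_{i,v}+L_{i,v}^\ast$ with $L_{i,v}^\ast=\lambda_v(a(f_{i,v}))$. Expanding $d_j = L_{i_j,v_j}+L_{i_j,v_j}^\ast$ and multiplying out turns the amplitude into a sum of $2^n$ vacuum matrix coefficients of products of operators $L^{\pm}$.

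First I would pin down the action of $L_{i,v}$ and $L_{i,v}^\ast$ on the natural orthonormal basis of $\cH$, whose elements are minimal reduced words $\boldw=w_1\cdots w_m$ with each letter $w_j$ decorated by a nonempty tensor string over $\{f_{1,w_j},f_{2,w_j}\}$ (the content lies in $\cK_{w_j}^\circ=\bigoplus_{p\geq 1}(\mathbb{C}^2)^{\otimes p}$, since $\bP_{w_j}$ acts on the Fock space $\cK_{w_j}$). Unwinding $\lambda_v(x)=U_v(x\ot 1)U_v^\ast$ and the definition of $U_v$, the operator $L_{i,v}$ prepends $f_{i,v}$ to the content of the leading $v$-letter -- creating a fresh letter $v$ when no $v$-letter can be brought to the front through relation II -- while $L_{i,v}^\ast$ removes and contracts the front symbol of that leading $v$-content against $f_{i,v}$ (and kills the vector when no $v$ can be brought to the front, in particular on $\Omega$). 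A key point extracted from the reduced-word condition is that in a reduced word at most one $v$-letter can be commuted to the front: if $w_a=w_b=v$ with $a<b$ and all intermediate letters lay in $\Link(v)$, the word would not be reduced. Hence the action on each basis vector is a single basis vector up to a scalar, and the $L^{\pm}$ behave as the creation/annihilation operators of a full Fock space over the alphabet $\{(v,i)\}$ subject to the commutations dictated by $E\Gamma$ (with $(v,1),(v,2)$ never commuting, consistently with $(v,v)\notin E\Gamma$); restricted to one vertex this recovers Speicher's non-crossing count.

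With this description the odd case is immediate: total Fock length is a $\mathbb{Z}_{\geq 0}$-grading raised by each $L_{i,v}$ and lowered by each $L_{i,v}^\ast$, and $\Omega$ has grade $0$, so a nonzero amplitude forces equally many creations and annihilations, whence $n$ is even. For $n=2r$, determinism makes each surviving pattern return to $\Omega$ only if it is balanced and never underflows, and such a pattern matches every annihilation with the creation it removes, i.e. a pair partition $\cV\in\cP_2(\boldv)$ (with $v_{e_k}=v_{z_k}$); conversely a pair partition prescribes the pattern (openers create, closers annihilate). The heart of the argument is to show that a term survives precisely when $\cV$ is $\Gamma$-admissible: reading the word as a stack that is last-in-first-out from the front up to the allowed commutations, the annihilation of $v_{e_k}$ at $z_k$ succeeds if and only if every still-uncancelled letter created between $e_k$ and $z_k$ can be commuted past $v_{e_k}$, i.e. lies in $\Link(v_{e_k})$. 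These blocking letters are exactly the openers $e_l$ with $e_k<e_l<z_k<z_l$, so the annihilation is obstructed precisely when some such crossing has $(v_{e_k},v_{e_l})\notin E\Gamma$ -- the condition defining $I_{\Gamma}(\cV)$. Thus the surviving terms are exactly the $\Gamma$-admissible pairings, giving $\tau_\Gamma(d_1\cdots d_n)=\sum_{\cV\in\cP_{2,{\rm nc},\Gamma}(\boldv)}\prod_{(e,z)\in\cV}\langle f_{i_e,v_e},f_{i_z,v_z}\rangle$; since each inner product is $1$ when the paired generators agree and $0$ otherwise, this counts the surviving pairings and yields $\#\cP_{2,{\rm nc},\Gamma}(\boldv)$.

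The main obstacle I anticipate is the second step together with the blocking analysis: making the action of $L_{i,v},L_{i,v}^\ast$ on $\cH$ genuinely precise through the isometries $U_v$ and the identification maps $\cQ_{v\boldw}$, and then proving rigorously that an annihilation is obstructed exactly by a non-commuting uncancelled letter. This requires careful bookkeeping of the two-level structure -- the outer word in the vertices and the inner Fock string inside each letter -- and a clean induction (on the number of operators applied, or on $n$) verifying that intermediate vectors remain reduced and that the matching of annihilations to creations is well defined. Once this is in place, the translation of the blocking condition into $I_{\Gamma}(\cV)=\emptyset$, and hence into $\Gamma$-admissibility, is the combinatorial step carried out above.
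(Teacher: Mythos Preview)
Your proposal is correct and follows essentially the same route as the paper: expand each $d_j$ into its creation and annihilation parts, describe their action on the graph product Fock space, match each annihilation to the creation it removes to obtain a pair partition, and identify the nonvanishing terms with the $\Gamma$-admissible pairings via exactly the blocking/crossing argument you outline. The only cosmetic difference is bookkeeping: where you track a two-level structure (outer reduced word in $V\Gamma$, inner Fock string at each letter), the paper flattens this by working with the set $\cW_0$ of words that are type~I equivalent to minimal words, so that $\cK_\Gamma\simeq\bigoplus_{\boldv\in\cW_0}\cH_{\boldv}$ and the inner Fock content is encoded by repeated letters.
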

\begin{proof}
Let $\cK_\Gamma$ be the graph product Hilbert space on which $\bP_\Gamma$ acts. Let $\cW_0$ be the set of all words that are I-equivalent to the words in $\cWmin$. For a word $\boldv \in \cW_0$ we recall that we have set $\cH_{\boldv} = \cH_{v_1}^\circ \otimes \ldots \otimes \cH_{v_n}^\circ$. We say that the vectors in the latter space have symbol $\boldv$.
We have $\cK_{\Gamma} \simeq \oplus_{\boldv \in \cW_0 } \cH_{\boldv}$ by associativity of tensor products. We describe the actions of the creation and annihilation operators:
\[
a_{i,v_j}^\ast \xi_1 \otimes \ldots \otimes \xi_n =
 \xi_{k_1} \otimes \ldots \otimes \xi_{k_s} \otimes e_{i,v_j} \otimes \xi_{k_{s+1}} \otimes \ldots \otimes \xi_{k_n},
\]
where $\xi_1 \otimes \ldots \otimes \xi_n$ has symbol $\boldw \in \cW_0$ and $w_{k_1} \ldots w_{k_{s}} v_{j} w_{k_{s+1}} \ldots w_{k_n}$ is the unique word in $\cW_0$ that is  II-equivalent to $v_j w_{1} \ldots w_{n}$ with $w_{k_{s}} \not = v_j$  and which satisfies the property that in case $w_{k_i} = w_{k_{i+1}}$ we must have $k_{i+1} = k_i + 1$.
 Furthermore for a vector $\xi_1 \otimes \ldots \otimes \xi_n$ with symbol $\boldw$ we find:
\[
\begin{split}
a_{i,v_j} \xi_1 \otimes \ldots \otimes \xi_n =
\left\{
\begin{array}{ll}
 \langle  \xi_{k_s }, f_i \rangle \xi_{k_1} \otimes \ldots \otimes \widehat{\xi}_{k_s} \otimes \ldots \otimes \xi_{k_n}    & \textrm{in case ($\ast$) below holds},\\
0  &\textrm{else},\\
\end{array}
\right.
\end{split}
\]
where ($\ast$) means that there exist $k_1 \ldots k_n$ determined by the property that  $v_j w_{k_1}\ldots \widehat{w}_{k_s} \ldots w_{k_n}$ is II-equivalent to $w_{1} \ldots w_{n}$ (so in particular $w_{k_s} = v_j$), $w_{k_{s-1}} \not = v_j$,  $w_{k_1}\ldots \widehat{w}_{k_s} \ldots w_{k_n}$ is in $\cW_0$ and in case $w_{k_i} = w_{k_{i+1}}$ we must have $k_{i+1} = k_i + 1$.

The action of  $d_1 \ldots d_n = \sum_{(k_1, \ldots, k_n) \in \{ 1, \ast \}^n } a_{i_1, v_1}^{k_1} \ldots a_{i_n, v_n}^{k_n}$ on $\Omega$ is thus described by sums of $n$ creation/ annihilation operators. If the trace,
\[
\tau_\Gamma( a_{i_1, v_1}^{k_1} \ldots a_{i_n, v_n}^{k_n} ) = \langle a_{i_1, v_1}^{k_1} \ldots a_{i_n, v_n}^{k_n} \Omega, \Omega \rangle,
\]
is non-zero,
then we must have exactly $\frac{n}{2}$ creation operators and $\frac{n}{2}$ annihilation operators occuring in $a_{i_1, v_1}^{k_1} \ldots a_{i_n, v_n}^{k_n}$ and in particular $n$ must be even. This proves the first statement. So let $n$ be even and let $(k_1, \ldots, k_n) \in \{ 1, \ast\}^n$ be such that exactly half of the terms equals 1 and the other half equals $\ast$. We associate a pair partition to any term $a_{i_1, v_1}^{k_1} \ldots a_{i_n, v_n}^{k_n}$ with non-zero trace in the following way.
We connect $s<r$ if $k_s = 1$ and $k_r = \ast$, $v_s = v_r$ and $a_{i_s, v_s}$ annihilates the vector that was created by $a^\ast_{i_r, v_r}$. Call the resulting pair partition $\cV_{k_1, \ldots, k_n}$.

\vspace{0.3cm}

\noindent {\bf Claim:} Let $\mathcal{S}$ be the set $\left\{  \cV_{k_1, \ldots, k_n} \mid (k_1, \ldots, k_n) \in \{1, \ast\}^n \textrm{ and } \tau( a_{i_1, v_i}^{k_1} \ldots a_{i_n, v_n}^{k_n} ) \not = 0 \right\}$. Then we have $\mathcal{S} = \cP_{2, {\rm nc}, \Gamma}(\boldv)$.

\noindent {\it Proof of the Claim:} $\subseteq$. Suppose the inclusion does not hold. Then there exist a partition $\cV \in \mathcal{S}$ with $e_a < e_b < z_a < z_b$ such that $(e_a, z_a) \in \cV, (e_b, z_b) \in \cV$ and $(v_a, v_b) \not \in E\Gamma$. This means that
\[
a_{i_{e_b}, v_{e_b}}^{k_{e_b}} (a_{i_{e_b} +1, v_{e_b +1}}^{k_{e_b +1} } \ldots a_{i_{n}, v_{n}}^{k_{n} } \Omega ) = 0,
\]
which contradicts that $\cV \in \mathcal{S}$.

$\supseteq$. Again suppose that the inclusion does not hold. Then there exists a pair partition $\cV \in \mathcal{P}_{2, nc, \Gamma}$ such that
\begin{equation}\label{Eqn=TauMakesZero}
\tau( a_{i_1, v_1}^{k_1} \ldots a_{i_n, v_n}^{k_n} ) = 0,
 \end{equation}
 where the $k_i$ are the unique indices determined by:
\[
k_i =
\left\{
\begin{array}{ll}
1 & \textrm{if } i = z_a \textrm{ for some } (e_a, z_a) \in \cV \textrm{ with } e_a < z_a,  \\
\ast & \textrm{if } i = e_a \textrm{ for some } (e_a, z_a) \in \cV \textrm{ with } e_a < z_a.
\end{array}
\right.
\]
In particular, $\cV = \cV_{k_1, \ldots, k_n}$. But if \eqref{Eqn=TauMakesZero} holds and taking into account that $a_{i_1, v_1}^{k_1} \ldots a_{i_n, v_n}^{k_n}$ creates as many vectors as it annihilates (i.e. exactly  half of the $k_i$'s equal $\ast$), this shows that we must have
\[
a_{i_{e_b}, v_{e_b}}^{k_{e_b}} (a_{i_{e_b} +1, v_{e_b +1}}^{k_{e_b +1} } \ldots a_{i_{n}, v_{n}}^{k_{n} } \Omega ) = 0,
\]
for some index $e_b$ for which $k_{e_b} = 1$. This can only happen if there were indices $e_a, z_a, z_b$ such that $e_a < e_b< z_a< z_b$ and $(e_a, z_a) \in \cV, (e_b, z_b) \in \cV$ such that $(v_a, v_b) \not \in E\Gamma$. This contradicts that $\cV \in \cP_{2, nc, \Gamma}(\boldv)$.

\vspace{0.3cm}

\noindent {\it Remainder of the proof:} We now have, putting $2r = n$,
\[
\begin{split}
\tau_\Gamma(d_1 \ldots d_n) = &
\sum_{ \cV = \{ (e_1, z_1), \ldots, (e_r, z_r) \} \in \cP_{2, {\rm nc}, \Gamma}(\boldv) }  \langle a_{i, v_{1}}^{k_1} \ldots a_{i_n, v_n}^{k_n} \Omega, \Omega \rangle \\
= & \sum_{ \cV = \{ (e_1, z_1), \ldots, (e_r, z_r) \} \in \cP_{2, {\rm nc}, \Gamma}(\boldv) }  \prod_{k=1}^{r} \langle f_{i_{e_k}}, f_{i_{z_k}} \rangle \\
= & \# \cP_{2, {\rm nc}, \Gamma}(\boldv).
\end{split}
\]
This concludes the theorem.
\end{proof}

Finally, we recall the following fact (see \cite{Voic} and \cite[Corollary 2.29]{CasFim}).

\begin{thm}\label{Thm=GaussianFactor}
$\bP$ is a II$_1$-factor. In particular $\bP_v := \bP$ and hence $\bP_\Gamma$ is a II$_1$-factor.
\end{thm}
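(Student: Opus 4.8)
The plan is to establish that the single free Araki-Woods algebra $\bP$ generated by $g_1 = a^\ast(f_1) + a(f_1)$ and $g_2 = a^\ast(f_2) + a(f_2)$ is a II$_1$-factor, since the statement about $\bP_v$ and $\bP_\Gamma$ then follows immediately: $\bP_v := \bP$ is a copy of the same algebra, and by \cite[Corollary 2.29]{CasFim} a graph product of II$_1$-factors is again a II$_1$-factor. So the entire content reduces to the first sentence.

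The approach I would take is to identify $\bP$ with a concrete known object rather than proving factoriality from scratch. The construction here is precisely the free Araki-Woods factor $\Gamma(\cH_\bR, U_t)''$ associated to the two-dimensional real Hilbert space $\bR f_1 \oplus \bR f_2$ with the trivial one-parameter orthogonal group $U_t = \id$. As we already noted in the text, by \cite[Theorem 2.11]{Shl} the algebra $\bP$ is the free product of the two von Neumann algebras generated by $g_1$ and $g_2$ separately. Each generator $g_i = a^\ast(f_i) + a(f_i)$ is a semicircular element with respect to the vacuum state $\tau = \omega_\Omega$, so $\bP$ is the free product of two copies of $L^\infty[-2,2]$, each carrying the semicircle distribution. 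Crucially, because the orthogonal transformation group is trivial, the vacuum state $\tau$ is a \emph{trace}: this is exactly the free-Gaussian (Wigner) case of Shlyakhtenko's construction, where $\Gamma(\cH_\bR, \id)'' \cong L(\bF_{\dim \cH_\bR})$. Here $\dim_\bR \cH_\bR = 2$, so $\bP \cong L(\bF_2)$, the free group factor on two generators.

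With this identification the factoriality is standard. First I would verify that $\tau$ is tracial on $\bP$, which in the trivial-modular-group case follows since each $g_i$ has symmetric (semicircular) distribution and freeness preserves the trace property; concretely one checks $\tau(xy) = \tau(yx)$ on the dense $\ast$-algebra of polynomials in $g_1, g_2$ using Theorem \ref{Thm=GaussianMoments} (with $\Gamma$ a single edge-free pair of vertices, which is just the free product case). Then, being a free product $(\bP_1, \tau_1) \ast (\bP_2, \tau_2)$ of two diffuse abelian algebras with respect to a trace, $\bP$ is a II$_1$-factor by the standard free-product factoriality results, equivalently because $L(\bF_2)$ is a well-known II$_1$-factor; this is exactly what is recorded in \cite{Ric} and \cite{Shl}. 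Since $\tau$ is a faithful normal tracial state and $\bP$ is a non-commutative diffuse factor, it is of type II$_1$.

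The main obstacle, though it is a mild one, is confirming that the state $\tau = \omega_\Omega$ is genuinely a trace in this specific model rather than merely a KMS state for a nontrivial modular group; this is what makes $\bP$ a II$_1$-factor rather than a type III free Araki-Woods factor. This hinges entirely on the orthogonal transformation group being trivial (equivalently, on $f_1, f_2$ being fixed real vectors with $U_t = \id$), which is built into the construction in Section \ref{Sect=Gaussian}. Once traciality is secured, everything else is a direct appeal to the cited results, so I would spend the proof pointing to \cite{Ric} or \cite{Shl} for the factoriality of $\bP$ and to \cite[Corollary 2.29]{CasFim} for the passage to the graph product $\bP_\Gamma$.
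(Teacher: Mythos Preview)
Your proposal is correct and aligns with the paper's treatment: the paper does not give a proof at all but simply records the statement as a fact drawn from \cite{Ric} or \cite{Shl}, with the passage to $\bP_\Gamma$ being the already-stated \cite[Corollary 2.29]{CasFim}. Your write-up supplies more explanatory detail (the identification $\bP \cong L(\bF_2)$ via the trivial orthogonal group and the traciality check) than the paper itself, but the underlying approach is identical.
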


\subsection{The Speicher central limit theorem} We state a version of Speicher's central limit theorem \cite{Spe} adapted to graph products. For a simplicial graph $\Gamma$ and some index set $I$ we let $s: I \times V\Gamma \times I \times V\Gamma \rightarrow \{ -1, 1 \}$ be a sign function. Let $\boldv$ be a word with letters in $V\Gamma$ and let $\cV \in \cP_2(\boldv)$. Let $n$ be the length of the word $\boldv$. We set,
\[
t(\cV) = \lim_{N \rightarrow \infty}  \frac{1}{ N^{n/2} } \sum_{\begin{array}{c}i_1, \ldots, i_n = 1 \\ (i_1, \ldots , i_n) \textrm{ is of class } \cV \end{array} }^N \left( \prod_{(a,b) \in I_{\Gamma}(\cV) } s(i_{e_a}, v_{e_a}, i_{e_b}, v_{e_b}) \right).
\]
The following theorem is proved in \cite[Theorem 1]{SpeicherLetter} (see also  \cite[Theorem 1]{Spe}).
\begin{thm}\label{Thm=Steps}
Let $\Gamma$ be a simplicial graph. Let $\mathcal{A}$ be a $\ast$-algebra generated by self-adjoint elements $b_{i,v}, i \in \mathbb{N}, v \in V\Gamma$.   Presume that  
%\item For any set of $k$ different indices $(i_1, v_1), \ldots, (i_k, v_k)$ and powers $l_1, \ldots, l_k$ we have
%\begin{equation}\label{Eqn=Form}
%\varphi( b_{i_1, v_1}^{l_1} \ldots  b_{i_k, v_k}^{l_k} ) = \varphi( b_{i_1, v_1}^{l_1} ) \ldots \varphi( b_{i_k, v_k}^{l_k} ).
%\end{equation}
%\item For all $i \in \mathbb{N}$ and $v \in V\Gamma$, we have mean $\varphi(b_{i,v}) = 0$ and  covariance $\varphi(b_{i,v}^2) = 1$.
there is a sign function $s: \mathbb{N} \times V\Gamma \times \mathbb{N} \times V\Gamma \rightarrow \{ -1 , 1\}$ satisfying $s(i, v, j, w) = 1$ whenever $(v,w) \in E \Gamma$ such that:
    \begin{equation}\label{Eqn=Commutation}
     b_{i,v} b_{j,w} = s(i,v,j,w) b_{j,w} b_{i,v}.
    \end{equation}
%\item\label{Item=BSquare} $b_{i_1, v_1}^2 = 1$.
%\item\label{Item=ExtraCondition} There exists a constant $C$ such that for any choice of indices $i_1, \ldots, i_n$ and vertices $v_1, \ldots, v_n$, we have $\vert \varphi( b_{i_1, v_1}  \ldots  b_{i_n, v_n}  )  \vert \leq C$.
For $v \in V\Gamma$ and $N \in \mathbb{N}$ set
\[
S_{N,v} := \frac{b_{1,v} + \ldots + b_{N,v}}{\sqrt{N}}.
\]
Assume that for every (not necessarily reduced) word $\boldv = v_1 \ldots v_n$  the quantity $t(\cV)$ exists for every $\cV \in \cP_2(\boldv)$. Then there exists a state $\rho$ on $\mathcal{A}$ that satisfies the following:
\begin{equation}\label{Eqn=SomeState}
\lim_{N \rightarrow \infty} \varphi( S_{N, v_1}  \ldots S_{N, v_n}  ) =
\left\{
\begin{array}{ll}
0 & n \textrm{ is odd.} \\
\sum_{\cV \in \cP_2(\boldv), \cV = \{ (e_1, z_1),\ldots, (e_r, z_r) \} } t(\cV)
   & n=2r,
\end{array}
\right.
\end{equation}
$\rho(b_{i,v}^2) = 1$ and  $\rho(b_{i,v} b_{j,w}) = 0$ in case $(i,v) \not = (j,w)$.
\end{thm}

 Next we show that $t(\cV)$ in the previous theorem can be computed almost everywhere. In order to do so, let $\geq$ be some linear order on $\mathbb{N} \times V\Gamma$. Naturally the symbol $>$ stands for $\geq$ but not equal.

\begin{lem}\label{Lem=Proba}
For $\Gamma$ a simplicial graph, let ${\bf s} = (s(v,i,w,j))_{v,w \in V\Gamma, i,j \in \mathbb{N}}$ be an infinite random matrix with the properties:
\begin{enumerate}
\item\label{Item=RandomMatrixI} $s(i,v, j, w) = s(j,w, i,v)$ for every $i,j \in I$ and $v,w \in V\Gamma$,
\item $s(i,v, j,w) = 1$ whenever $(v,w) \in E\Gamma$,
%\item $s(i,v, j, v) = 0$,
\item $s(i,v,j,w)$ with $(i,v) > (j, w)$ are independent,
\item\label{Item=RandomMatrixV} ${\rm prob}(s(i,v,j,w) = 1) = p$ and ${\rm prob}(s(i,v,j,w) = -1) = q := 1-p$, whenever $(v,w) \not \in E\Gamma$.
\end{enumerate}
Let $\boldv$ be a (not necessarily reduced) word with letters in $V\Gamma$. Then, for almost every ${\bf s}$ we have for all $\cV \in \cP_{2}(\boldv)$ that
\[
t(\cV) = (p-q)^{ \# I_{\Gamma}(\cV) }.
\]
\end{lem}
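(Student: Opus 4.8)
The plan is to prove, for each fixed $\cV \in \cP_2(\boldv)$ separately, that the random sequence defining $t(\cV)$ converges almost surely to $(p-q)^{\#I_\Gamma(\cV)}$; since $\cP_2(\boldv)$ is finite, intersecting the resulting full-measure events yields a single full-measure set of matrices ${\bf s}$ on which the conclusion holds for every $\cV$ at once. Write $\cV = \{(e_1,z_1), \ldots, (e_r,z_r)\}$ with $e_k < z_k$, so $n = 2r$, and set
\[
t_N(\cV) = \frac{1}{N^r} \sum_{\substack{i_1, \ldots, i_n = 1 \\ (i_1, \ldots, i_n) \text{ of class } \cV}}^N \prod_{(a,b) \in I_\Gamma(\cV)} s(i_{e_a}, v_{e_a}, i_{e_b}, v_{e_b}),
\]
so that $t(\cV) = \lim_{N} t_N(\cV)$. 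Because $\cV$ is a pair partition, a tuple of class $\cV$ is precisely a choice of one index $j_a := i_{e_a} = i_{z_a}$ per block $a$, subject to $j_a \neq j_b$ whenever $v_{e_a} = v_{e_b}$ with $a \neq b$. The point I would record first is that for any such admissible $\vec{j} = (j_1, \ldots, j_r)$ the $r$ pairs $(j_a, v_{e_a})$ are pairwise distinct: if $v_{e_a} = v_{e_b}$ the class condition forces $j_a \neq j_b$, and otherwise the vertices already differ. Consequently no factor is a diagonal entry of ${\bf s}$, and distinct crossings $(a,b) \in I_\Gamma(\cV)$ index distinct unordered entries of ${\bf s}$.

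Each factor $s(j_a, v_{e_a}, j_b, v_{e_b})$ with $(a,b) \in I_\Gamma(\cV)$ has $(v_{e_a}, v_{e_b}) \notin E\Gamma$ (the loop-free case $v_{e_a} = v_{e_b}$ included), so by hypothesis~\ref{Item=RandomMatrixV} it is a genuine $\pm 1$ variable of mean $p-q$; by the independence hypothesis together with the previous observation the $\#I_\Gamma(\cV)$ factors are mutually independent. Hence \emph{for every} admissible $\vec{j}$,
\[
\mathbb{E}\Bigl[ \prod_{(a,b) \in I_\Gamma(\cV)} s(j_a, v_{e_a}, j_b, v_{e_b}) \Bigr] = (p-q)^{\#I_\Gamma(\cV)}.
\]
The number of admissible $\vec{j}$ is a product of falling factorials, one per vertex occurring in $\boldv$, hence $N^r(1 + O(1/N))$, and therefore $\mathbb{E}[t_N(\cV)] \to (p-q)^{\#I_\Gamma(\cV)}$. (When $I_\Gamma(\cV) = \emptyset$ the product is empty, $t_N(\cV)$ is deterministic, and the limit $1 = (p-q)^0$ is immediate.)

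To upgrade convergence in mean to almost sure convergence I would bound the variance. Writing $P(\vec{j})$ for the product above, one has $\mathrm{Var}(t_N(\cV)) = N^{-2r} \sum_{\vec{j}, \vec{j}'} \mathrm{Cov}(P(\vec{j}), P(\vec{j}'))$, and the covariance vanishes whenever $P(\vec{j})$ and $P(\vec{j}')$ share no entry of ${\bf s}$, since then they are independent and each has mean $(p-q)^{\#I_\Gamma(\cV)}$. A shared entry means $\{(j_a, v_{e_a}), (j_b, v_{e_b})\} = \{(j'_{a'}, v_{e_{a'}}), (j'_{b'}, v_{e_{b'}})\}$ for some $(a,b),(a',b') \in I_\Gamma(\cV)$; matching the two positions forces $j'_{a'} = j_a$ and $j'_{b'} = j_b$, so it pins down two of the $r$ indices of $\vec{j}'$ once $\vec{j}$ is fixed. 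Thus the pairs $(\vec{j}, \vec{j}')$ with non-zero covariance number at most $O(N^{r}\cdot N^{r-2}) = O(N^{2r-2})$, and since each covariance is bounded by $2$ this gives $\mathrm{Var}(t_N(\cV)) = O(1/N^2)$. By Chebyshev's inequality $\mathbb{P}(|t_N(\cV) - \mathbb{E}\, t_N(\cV)| > \varepsilon)$ is summable over $N$ for each $\varepsilon > 0$, so Borel--Cantelli yields $t_N(\cV) \to (p-q)^{\#I_\Gamma(\cV)}$ almost surely; in particular $t(\cV)$ exists almost surely with the asserted value, and intersecting over $\cP_2(\boldv)$ concludes.

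The main obstacle is the combinatorial bookkeeping hidden in the two counting claims, both of which rest on the injectivity of $a \mapsto (j_a, v_{e_a})$ noted at the outset: that for a single admissible $\vec{j}$ the sign factors are genuinely independent, so that the mean is the clean power $(p-q)^{\#I_\Gamma(\cV)}$ uniformly in $\vec{j}$; and that sharing one entry between two tuples really pins \emph{two} indices, which is what produces the gain $N^{2r-2}$ rather than $N^{2r-1}$ and hence the summable (rather than merely $O(1/N)$) variance needed for Borel--Cantelli along the full sequence. Verifying these two points carefully is the crux; the rest is routine.
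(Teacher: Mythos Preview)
Your proof is correct and follows essentially the same route as the paper: compute $\mathbb{E}[t_N(\cV)]$ via independence of the sign factors, bound $\mathrm{Var}(t_N(\cV))$ by $O(1/N^2)$ by arguing that a non-vanishing covariance term forces two of the $r$ free indices of $\vec{j}'$ to be pinned by $\vec{j}$, and then conclude almost sure convergence from summability of the variances. The paper phrases the last step through \cite[Lemma~19.5]{Bau} and a union bound on $\sup_{M\ge N}$ rather than Borel--Cantelli, but this is the same mechanism; your explicit verification of the injectivity of $a\mapsto(j_a,v_{e_a})$ and of the ``two indices pinned'' claim is in fact more careful than the paper's treatment.
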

\begin{proof}
As $s(i,v, j,w) = 1$ whenever $(v,w) \in E\Gamma$ we have (trivially) that ${\rm prob}( s(i,v, j,w) = 1) = 1$ whenever $(v,w) \in E\Gamma$. So the lemma follows from \cite[Theorem 2]{SpeicherLetter}.
\end{proof}

\subsection{Matrix models} We now prove that the von Neumann algebra $\bP_\Gamma$ introduced in Section \ref{Sect=Gaussian} is embeddable. For the moment assume that $\Gamma$ is a finite graph. Let $N \in \mathbb{N}$ and set $I_N = \{0, \ldots, N\}$. Let $s: I_N \times V\Gamma \times I_N \times V\Gamma \rightarrow \{ -1, 1 \}$ be a sign function satisfying the properties:
\begin{enumerate}
\item\label{Item=EpsilonI} $s( i, v, j, w ) = s( j, w, i, v )$;
\item\label{Item=EpsilonII} $s( i, v, i, v ) = -1$.
\end{enumerate}
We let $x_{i, v}$ be algebraic generators of an algebra $\mathcal{A}$ satisfying the following relations:
\[
x_{i,v } x_{j, w} - s(i,v,j,w) x_{j,w} x_{i,v} = 2 \delta_{i,j} \delta_{v,w}.
\]
In particular $x_{i,v}^2 = 1$ and it follows from these (anti-)commutation relations that $\mathcal{A}$ is finite dimensional. Fix a linear order on $I_N \times V\Gamma$. For $A \subseteq I_N \times V\Gamma$ we set,
\[
x_A := \prod_{(i,v) \in A} x_{i,v},
\]
where the product is taken with respect to the linear order. The sets $x_A$ form a basis of $\mathcal{A}$. We equip $\mathcal{A}$ with the $\ast$-structure given by $x_{i,v}^\ast = x_{i,v}$. Let $\varphi$ be the tracial function $\mathcal{A} \rightarrow \mathbb{C}$ defined by $\varphi(x_{A}) = \delta_{A, \emptyset}$. Then $\langle x, y \rangle = \varphi(y^\ast x)$ defines an inner product and hence a Hilbert space $L^2(\mathcal{A}, \varphi)$. We define partial isometries $a^\ast_{i,v}$ and $a_{i,v}$ by
\begin{equation}\label{Eqn=Create}
a^\ast_{i,v} x_A = \left\{
\begin{array}{ll}
x_{i,v} X_A & \textrm{ if } (i,v) \not \in A,\\
0 & \textrm{ if } (i,v) \in A,
\end{array}
\right.
\end{equation}
and
\begin{equation}\label{Eqn=Destroy}
a_{i,v} x_A = \left\{
\begin{array}{ll}
x_{i,v} X_A & \textrm{ if } (i,v) \in A,\\
0 & \textrm{ if } (i,v) \not \in A,
\end{array}
\right.
\end{equation}
Note that \eqref{Eqn=Create} is the adjoint of \eqref{Eqn=Destroy}. Then we set
\[
b_{i,v} = a_{i,v}^\ast + a_{i,v}.
\]
These operators satisfy the relations
\[
b_{i,v} b_{j,w} = s(i,v,j,w) b_{j,w} b_{i,v}.
\]
We set for $N \in \mathbb{N}$ even,
\[
S_{N,v,1} = \frac{1}{\sqrt{N}} \sum_{i=0}^{N-1} b_{2i,v}, \qquad S_{N,v,2} = \frac{1}{\sqrt{N}} \sum_{i=0}^{N-1} b_{2i+1,v},
\]
Now we assume that $s: I_\infty \times V\Gamma \times I_\infty \times V\Gamma \rightarrow \{-1, 1\}$ is an infinite random matrix with entries being independent identically distributed random variables subject to conditions \eqref{Item=EpsilonI} and \eqref{Item=EpsilonII} and such that whenever $(v,w) \not \in E\Gamma$ we have ${\rm prob}(s(i,v,j,w) = 1) = {\rm prob}(s(i,v,j,w) = -1) = \frac{1}{2}$. The following result is now a direct consequence of Theorem \ref{Thm=GaussianMoments}, Theorem \ref{Thm=Steps} and Lemma \ref{Lem=Proba}. Note that Theorem \ref{Thm=Steps} shows that there is some $\rho$ satisfying the moment formula of Theorem \ref{Thm=Steps}, but in fact the extra condition $b_{i,v}^2 = 1$ implies that $\rho$ is $\varphi$ above (see Theorem 3 of \cite{SpeicherLetter} and in particular the discussion after it).
\begin{thm}
Let $\Gamma$ be a finite simplicial graph and let $V\Gamma = \{ v_1, \ldots, v_n \}$.
For any $\ast$-polynomial $Q$ in $2 \# V\Gamma$ non-commutating variables we have
\begin{equation}\label{Eqn=GvsS}
\tau\left( Q(g_{1,v_1}, \ldots, g_{1, v_n}, g_{2,v_1}, \ldots, g_{2,v_n})  \right)
= \lim_{N \rightarrow \infty} \varphi\left( Q( S_{N, v_1, 1}, \ldots, S_{N, v_n, 1}, S_{N, v_1, 2}, \ldots, S_{N, v_n, 2} )  \right)
\end{equation}
for almost every $s$.
\end{thm}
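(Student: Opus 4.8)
The plan is to reduce the polynomial identity to a comparison of individual moments and then to evaluate the two sides separately, the left-hand side through Theorem \ref{Thm=GaussianMoments} and the right-hand side through Theorem \ref{Thm=Steps} together with Lemma \ref{Lem=Proba}. All the operators $g_{i,v}$ and $S_{N,v,i}$ are self-adjoint, so a $*$-monomial in them is an ordinary monomial and both sides of \eqref{Eqn=GvsS} are linear in $Q$; hence it suffices to prove, for every finite sequence $(t_1,w_1), \ldots, (t_n,w_n)$ with $t_j \in \{1,2\}$ and $w_j \in V\Gamma$, that
\[
\tau(g_{t_1,w_1} \cdots g_{t_n,w_n}) = \lim_{N \to \infty} \tau(S_{N,w_1,t_1} \cdots S_{N,w_n,t_n}).
\]
Let $\boldv$ denote the word $((t_1,w_1), \ldots, (t_n,w_n))$ over the doubled alphabet $\{1,2\} \times V\Gamma$, so that two letters of $\boldv$ coincide exactly when they share both type and vertex.

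By Theorem \ref{Thm=GaussianMoments} the left-hand side is $0$ when $n$ is odd and equals $\# \cP_{2, {\rm nc}, \Gamma}(\boldv)$ when $n$ is even; here the factors $\langle f_{t_{e_k}}, f_{t_{z_k}} \rangle = \delta_{t_{e_k}, t_{z_k}}$ in the moment formula force the admissible pair partitions to respect the type index as well as the vertex, which is precisely the pairing constraint encoded by equality of letters of $\boldv$.

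For the right-hand side I would apply Theorem \ref{Thm=Steps} to the enlarged simplicial graph $\Gamma'$ with vertex set $V\Gamma \times \{1,2\}$, declaring $((v,s),(w,t)) \in E\Gamma'$ precisely when $(v,w) \in E\Gamma$, so that the two types at a fixed vertex never commute. Setting $\tilde{b}_{m,(v,1)} := b_{2m,v}$ and $\tilde{b}_{m,(v,2)} := b_{2m+1,v}$ identifies $S_{N,w,t}$ with the sum $S_{N,(w,t)}$ of Theorem \ref{Thm=Steps} after an inconsequential shift of the summation range; the even/odd splitting is exactly the device that decouples the two Fock directions $f_1, f_2$. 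The four hypotheses of Theorem \ref{Thm=Steps} are then routine to verify for the tracial functional $\varphi$ of the matrix model (the $\tau$ appearing on the right of \eqref{Eqn=GvsS}): a direct computation from \eqref{Eqn=Create}--\eqref{Eqn=Destroy} shows that $b_{i,v}$ acts as left multiplication by the self-adjoint unitary $x_{i,v}$, whence $b_{i,v}^2 = 1$ and $\|b_{i,v}\| = 1$, and together with $\varphi(x_A) = \delta_{A,\emptyset}$ this gives the factorization \eqref{Eqn=Form}, the normalizations $\varphi(b_{i,v}) = 0$ and $\varphi(b_{i,v}^2) = 1$, and the uniform bound \eqref{Item=ExtraCondition} with $C = 1$, while relation \eqref{Eqn=Commutation} is the defining relation of the matrix model once $s$ is required to equal $1$ on edges. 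The transported sign function $\tilde{s}$ satisfies conditions \eqref{Item=RandomMatrixI}--\eqref{Item=RandomMatrixV} of Lemma \ref{Lem=Proba} with $p = q = \tfrac12$, because the relabeling $(m,(v,t)) \mapsto (2m+t-1, v)$ is a bijection and so preserves independence and the fair-coin distribution off edges.

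Lemma \ref{Lem=Proba} then guarantees that for almost every $s$ every limit $t(\cV)$, $\cV \in \cP_2(\boldv)$, exists and equals $(p-q)^{\# I_{\Gamma'}(\cV)} = 0^{\# I_{\Gamma'}(\cV)}$, that is $1$ for $\Gamma'$-admissible $\cV$ and $0$ otherwise, so Theorem \ref{Thm=Steps} yields
\[
\lim_{N \to \infty} \tau(S_{N,w_1,t_1} \cdots S_{N,w_n,t_n}) = \# \cP_{2, {\rm nc}, \Gamma'}(\boldv).
\]
Since the edge condition defining $I_{\Gamma'}$ depends only on the underlying vertices while the pairing constraint is equality of doubled letters, one has $\cP_{2,{\rm nc},\Gamma'}(\boldv) = \cP_{2,{\rm nc},\Gamma}(\boldv)$, so the two moment counts agree. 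Finally, as $\Gamma$ is finite there are only countably many words $\boldv$, and intersecting the corresponding almost-sure events leaves a single null set off which \eqref{Eqn=GvsS} holds for all $Q$. I expect the genuine content to lie not in the routine verification of the hypotheses nor in the standard measure-theoretic bookkeeping, but in recognizing that the even/odd decoupling makes type-matching and $\Gamma$-admissibility coincide on both sides, which is what forces the Gaussian and matrix-model counts to be equal.
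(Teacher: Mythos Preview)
Your argument is correct and follows exactly the route the paper indicates: the paper's own proof is the single sentence ``The following result is now a consequence of Theorem \ref{Thm=GaussianMoments}, Theorem \ref{Thm=Steps} and Lemma \ref{Lem=Proba}'', and your doubled-graph device $\Gamma'$ together with the even/odd relabeling is precisely the natural way to make that sentence rigorous, since Theorem \ref{Thm=Steps} and Lemma \ref{Lem=Proba} are stated for a single family $S_{N,v}$ per vertex whereas the matrix model produces two. Your observation that the inner-product factors $\langle f_{t_{e_k}}, f_{t_{z_k}}\rangle$ in the proof of Theorem \ref{Thm=GaussianMoments} enforce type-matching (so that the count is really over pair partitions respecting the doubled letter, not just the vertex) is the point that makes the two sides match, and it is exactly what the last displayed line of that proof uses.
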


Let $\cU$ be a free ultrafilter on $\mathbb{N}$. We wish to prove that $\bP_\Gamma$ (see Section \ref{Sect=Gaussian}) is Connes embeddable by defining an injective $\ast$-homomorphism $\Phi$ by setting
\[
\Phi\left(  Q(g_{1,v_1}, \ldots, g_{1, v_n}, g_{2,v_1}, \ldots, g_{2,v_n})    \right) =
\prod_{N, \cU}  Q(  S_{N, v_1, 1}, \ldots, S_{N, v_n, 1}, S_{N, v_1, 2}, \ldots, S_{N, v_n, 2} ).
\]
However, the entries of the ultra product on the right hand side may not be bounded. Therefore we need bounded cut-offs and at this point the argument is exactly the same as in \cite[Section 3]{Nou}. Meaning, let $C$ be any constant such that $\Vert g_{1, v_1} \Vert \leq C$. Let $\chi_{[-C, C]}$ be the characteristic function of the interval $[-C, C]$. We set $\widetilde{S}_{N, v_i, i} := \chi_{[-C, C]}(S_{N, v, i} ) S_{N, v, i}$. What is proved in \cite[Section 3]{Nou} is that \eqref{Eqn=GvsS} also holds if $S_{N, v_i, i}$ is replaced by $\widetilde{S}_{N, v_i, i}$. As a consequnce:

\begin{thm}\label{Thm=PGammaIsEmbeddable}
The graph product von Neumann algebra $\bP_\Gamma$ is Connes embeddable.
\end{thm}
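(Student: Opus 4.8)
The plan is to assemble the results obtained above into an explicit trace-preserving embedding of $\bP_\Gamma$ into $\cR^\cU$. I first treat the case where $\Gamma$ is finite, using the matrix models, and then reduce the countable case to it by an exhaustion argument. So fix a sign matrix $s$ in the full-measure set for which the truncated form of \eqref{Eqn=GvsS} holds (such $s$ exists by Lemma~\ref{Lem=Proba} combined with Theorem~\ref{Thm=GaussianMoments} and Theorem~\ref{Thm=Steps}). For each even $N$ the matrix model algebra $\mathcal{A}=\mathcal{A}_N$ is finite dimensional and carries the faithful tracial functional $\varphi$; hence it is a finite-dimensional von Neumann algebra with faithful trace, and as such it embeds trace-preservingly into the hyperfinite II$_1$-factor $\cR$. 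Transporting the truncated operators $\widetilde{S}_{N,v,i}=\chi_{[-C,C]}(S_{N,v,i})S_{N,v,i}$ along these embeddings, I obtain for each $N$ self-adjoint elements of $\cR$ of norm at most $C$, uniformly in $N$. The bounded sequence $(\widetilde{S}_{N,v,i})_N$ therefore defines an element $(\widetilde{S}_{N,v,i})^\bullet\in\cR^\cU$.

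Next I define $\Phi$ on the $\sigma$-weakly dense $\ast$-subalgebra of $\bP_\Gamma$ generated by the $g_{i,v}$ by sending a $\ast$-polynomial $Q$ in the generators to $\Phi(Q(g)):=\big(Q(\widetilde{S}_{N,v_1,1},\ldots,\widetilde{S}_{N,v_n,2})\big)^\bullet$. Multiplicativity and $\ast$-preservation are automatic, since products and adjoints of bounded sequences are computed coordinatewise in $\cR^\cU$. The essential point is well-definedness together with trace preservation: because the embeddings $\mathcal{A}_N\hookrightarrow\cR$ are trace-preserving, the truncated form of \eqref{Eqn=GvsS} (the content imported from \cite[Section 3]{Nou}) shows that the ordinary limit $\lim_{N\to\infty}\tau(Q(\widetilde{S}_N))$ exists and equals $\tau_\Gamma(Q(g))$, so the $\cU$-limit agrees with it and $\tau(\Phi(Q(g)))=\tau_\Gamma(Q(g))$. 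Applying this to $Q^\ast Q$ gives $\|\Phi(Q(g))\|_2=\|Q(g)\|_2$; in particular $Q(g)=0$ forces $\Phi(Q(g))=0$, so $\Phi$ is a well-defined trace-preserving $\ast$-homomorphism on the generated algebra.

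To extend $\Phi$ to all of $\bP_\Gamma$ I upgrade it to an operator-norm isometry. For $a=Q(g)$ the element $a^\ast a$ is positive, and since $\tau_\Gamma$ is faithful one has $\|a\|^2=\|a^\ast a\|=\lim_k\tau_\Gamma((a^\ast a)^k)^{1/k}$; the same spectral-radius formula in $\cR^\cU$ together with trace preservation yields $\|\Phi(a)\|=\|a\|$. Hence $\Phi$ is isometric and extends to a normal $\ast$-homomorphism $\bP_\Gamma\to\cR^\cU$, which is injective because $\tau_\Gamma$ is faithful (recall from Theorem~\ref{Thm=GaussianFactor} that $\bP_\Gamma$ is a II$_1$-factor). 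This settles the finite case.

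Finally, for countable $V\Gamma$ I write $\Gamma$ as an increasing union of finite full subgraphs $\Gamma_1\subseteq\Gamma_2\subseteq\cdots$. By the universal property of graph products \cite[Proposition 2.22]{CasFim} each inclusion induces a trace-preserving inclusion $\bP_{\Gamma_n}\subseteq\bP_\Gamma$, and $\bigcup_n\bP_{\Gamma_n}$ is $\sigma$-weakly dense in $\bP_\Gamma$. Each $\bP_{\Gamma_n}$ is Connes embeddable by the finite case, so every finitely generated $\ast$-subalgebra of the dense algebra $\bigcup_n\bP_{\Gamma_n}$ admits matricial microstates; by weak density this property passes to $\bP_\Gamma$, which is therefore Connes embeddable. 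I expect the genuine obstacle of the whole scheme to be not this assembly but the unboundedness of the matrix models $S_{N,v,i}$: it is precisely the truncation argument of \cite{Nou}, ensuring that replacing $S_{N,v,i}$ by the bounded $\widetilde{S}_{N,v,i}$ leaves the limiting mixed moments in \eqref{Eqn=GvsS} unchanged, that makes the ultraproduct element and hence $\Phi$ well defined.
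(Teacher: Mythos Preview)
Your proposal is correct and follows essentially the same route as the paper: build the trace-preserving map from the polynomial algebra in the $g_{i,v}$'s into $\cR^\cU$ via the finite-dimensional matrix models $(\mathcal{A}_N,\varphi)$, invoke the truncation argument of \cite[Section~3]{Nou} to pass from $S_{N,v,i}$ to the bounded $\widetilde{S}_{N,v,i}$ without changing the limiting moments, and then extend. You supply more detail than the paper (which simply cites \cite{Nou} for this step) and also spell out the countable-$\Gamma$ exhaustion argument, which the paper postpones to the proof of Theorem~\ref{Thm=GraphQWEP}; one small remark is that your extension step is cleaner if phrased via the $L^2$-isometry rather than the operator-norm spectral-radius formula, since normality of the extended $\ast$-homomorphism comes for free from the $L^2$ picture.
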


\section{Embeddability of graph products}\label{Sect=GraphEmbed}
We prove the main result of this paper, namely that the Connes embedding problem is stable under graph products.
The following Lemma \ref{Lem=CEVSupport} is proved in the first paragraph of the proof of \cite[Lemma 7.18]{Jun}.

\begin{lem}\label{Lem=CEVSupport}
Let $\bN$ and $\bM$ be von Neumann algebras. Let $\cE: \bN \rightarrow \bM$ be a conditional expectation. Let $\omega$ be a normal state on $\bM$. Let $f \in \bN$ be the support of $\omega \circ \cE$ and let $e \in \bM$ be the support of $\omega$. Then $f$ commutes with every element in $e \bM e$.
\end{lem}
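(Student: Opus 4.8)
The plan is to prove the stronger statement that $f$ commutes with every \emph{unitary} of the von Neumann algebra $e\bM e$ (whose unit is $e$); since every element of a von Neumann algebra is a linear combination of its unitaries, this yields commutation of $f$ with all of $e\bM e$. Throughout write $\psi = \omega \circ \cE$, which is a normal state on $\bN$ with support $f$. First I would record that $f \leq e$: indeed $e \in \bM \subseteq \bN$ and, as $\cE$ restricts to the identity on $\bM$, $\psi(e) = \omega(\cE(e)) = \omega(e) = 1$; since $f$ is the smallest projection of $\bN$ with $\psi$-value $1$, this forces $f \leq e$, so that $ef = fe = f$ and hence $fef = f$.

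The heart of the argument is a single computation. Fix a unitary $u \in e\bM e$, so that $u \in \bM$, $eu = ue = u$ and $uu^\ast = u^\ast u = e$. Using $ef = fe = f$ one checks that $ufu^\ast$ is again a projection in $\bN$. I claim $\psi(ufu^\ast) = 1$. By the $\bM$-bimodule property of the conditional expectation, $\cE(ufu^\ast) = u\,\cE(f)\,u^\ast$, whence $\psi(ufu^\ast) = \omega(u\,\cE(f)\,u^\ast)$. Here I use that the support $e$ of $\omega$ satisfies $e\,\cE(f)\,e = e$; this holds because $1 - \cE(f) \geq 0$ and $\omega(1 - \cE(f)) = \psi(1 - f) = 0$, so the support property of $\omega$ gives $e(1 - \cE(f))e = 0$. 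Combining this with $u = ue$ yields $u\,\cE(f)\,u^\ast = u(e\,\cE(f)\,e)u^\ast = ueu^\ast = uu^\ast = e$, and therefore $\psi(ufu^\ast) = \omega(e) = 1$.

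To conclude, since $ufu^\ast$ is a projection with $\psi(ufu^\ast) = 1$ and $f$ is the support (the smallest such projection) of $\psi$, I obtain $f \leq ufu^\ast$. Applying the same estimate with $u^\ast$ in place of $u$ gives $f \leq u^\ast f u$, and conjugating by $u$ yields $ufu^\ast \leq u(u^\ast f u)u^\ast = efe = f$. Hence $ufu^\ast = f$, i.e. $uf = fu$, and letting $u$ range over the unitaries of $e\bM e$ and taking linear combinations finishes the proof.

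The argument is almost purely algebraic, so I expect the only delicate inputs to be the two support facts I rely on: the minimality characterization of $f$ (invoked twice) and the identity $e\,\cE(f)\,e = e$ coming from the support of $\omega$. The step that must be handled with care is establishing $\psi(ufu^\ast) = 1$, where the bimodule property of $\cE$ and the support property of $\omega$ have to be combined correctly; this is also where normality of $\cE$ is implicitly needed, so that $\psi$ is a normal state with a well-defined support projection.
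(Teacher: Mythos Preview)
Your proof is correct. The paper does not actually supply its own argument for this lemma; it simply records that the statement is proved in the first paragraph of the proof of \cite[Lemma 7.18]{Jun}. Your self-contained argument---reducing to unitaries $u$ of $e\bM e$, using the $\bM$-bimodule property of $\cE$ together with faithfulness of $\omega$ on $e\bM e$ to get $e\,\cE(f)\,e=e$ and hence $\psi(ufu^\ast)=1$, then invoking minimality of the support projection twice to force $ufu^\ast=f$---is precisely the standard way this fact is established, and it matches the computation one finds in Junge's paper. Your closing caveat is also apt: normality of $\cE$ is needed so that $\psi=\omega\circ\cE$ is a normal state with a well-defined support projection, a hypothesis the paper leaves implicit.
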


\begin{lem}\label{Lem=PointGraphProduct}
Let $\bM_v, v \in V\Gamma$ be von Neumann algebras with normal faithful tracial state $\tau_v$. Suppose that for every $v \in V\Gamma$ there are von Neumann algebras $\bA_{v,i}$ with normal faithful tracial states $\tau_{v,i}$ with a trace preserving embedding:
\[
\pi_v: \bM_v \rightarrow \prod_{i, \cU} [\bA_{v, i}, \tau_{v,i}],
\]
then there exists a trace preserving embedding,
\[
\pi_{\Gamma}: \bM_{\Gamma} \rightarrow \prod_{i, \cU} [ \bA_{\Gamma,i}, \tau_{\Gamma,i} ].
\]
\end{lem}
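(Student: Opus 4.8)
The plan is to build the target ultraproduct index-by-index out of graph products and then to invoke the universal property of the graph product. For each index $i$, let $\bA_{\Gamma,i}$ be the graph product von Neumann algebra of the family $(\bA_{v,i})_{v\in V\Gamma}$ over the graph $\Gamma$, equipped with its graph product state $\tau_{\Gamma,i}$; since each $\tau_{v,i}$ is tracial, $\tau_{\Gamma,i}$ is a normal faithful tracial state. For every vertex $v$ the canonical inclusion $\bA_{v,i}\hookrightarrow\bA_{\Gamma,i}$ is normal and trace preserving, so it is isometric for the $\|\cdot\|_2$-norms and therefore induces, entrywise on representing sequences, a normal trace preserving embedding $\prod_{i,\cU}[\bA_{v,i},\tau_{v,i}]\hookrightarrow\prod_{i,\cU}[\bA_{\Gamma,i},\tau_{\Gamma,i}]=:\bB$. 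Composing with $\pi_v$ yields trace preserving embeddings $\widetilde\pi_v\colon\bM_v\to\bB$, arranged so that $\widetilde\pi_v(\bM_v)$ sits inside the copy of $\prod_{i,\cU}[\bA_{v,i},\tau_{v,i}]$ coming from the vertex $v$.

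I would then verify that the family $(\widetilde\pi_v(\bM_v))_{v\in V\Gamma}$ is graph independent inside $\bB$ with respect to the ultraproduct trace $\tau_\cU:=(\tau_{\Gamma,i})^\bullet$, so that the universal property of \cite[Proposition 2.22]{CasFim} supplies a trace preserving $\ast$-homomorphism $\pi_\Gamma\colon\bM_\Gamma\to\bB$ restricting to $\widetilde\pi_v$ on each $\bM_v$. Two conditions must be checked. First, if $(v,w)\in E\Gamma$ then $\bA_{v,i}$ and $\bA_{w,i}$ commute inside $\bA_{\Gamma,i}$ for every $i$, and this commutation passes to $\bB$, so $\widetilde\pi_v(\bM_v)$ and $\widetilde\pi_w(\bM_w)$ commute. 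Second, for a reduced word $v_1\ldots v_n\in\cWred$ and elements $x_j\in\bM_{v_j}^\circ$ one needs the mixed moment $\tau_\cU(\widetilde\pi_{v_1}(x_1)\ldots\widetilde\pi_{v_n}(x_n))$ to vanish.

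This vanishing of mixed moments is the step I expect to be the main obstacle, and the difficulty is that if $\pi_{v_j}(x_j)=(y_{j,i})^\bullet$ with $y_{j,i}\in\bA_{v_j,i}$, the individual entries need not lie in the kernel $\bA_{v_j,i}^\circ$, so the graph product moment vanishing inside a single $\bA_{\Gamma,i}$ does not apply to $(y_{j,i})$ directly. The resolution is to recentre the entries: set $y_{j,i}^\circ=y_{j,i}-\tau_{v_j,i}(y_{j,i})1\in\bA_{v_j,i}^\circ$. Because $\pi_{v_j}$ is trace preserving and $x_j\in\bM_{v_j}^\circ$, we have $\lim_{i,\cU}\tau_{v_j,i}(y_{j,i})=\tau_{v_j}(x_j)=0$, hence $\lim_{i,\cU}\|y_{j,i}-y_{j,i}^\circ\|_{2,\tau_{v_j,i}}=\lim_{i,\cU}|\tau_{v_j,i}(y_{j,i})|=0$; since the tracial ultraproduct identifies representing sequences with vanishing $\|\cdot\|_2$-difference along $\cU$, we may replace $y_{j,i}$ by $y_{j,i}^\circ$ without changing $\widetilde\pi_{v_j}(x_j)$. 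Now for each fixed $i$ the word $v_1\ldots v_n$ is reduced and $y_{j,i}^\circ\in\bA_{v_j,i}^\circ$, so graph independence inside $\bA_{\Gamma,i}$ gives $\tau_{\Gamma,i}(y_{1,i}^\circ\ldots y_{n,i}^\circ)=0$. As products of $\bullet$-sequences are computed entrywise and $\tau_\cU$ is the limit along $\cU$ of the entrywise traces, this yields $\tau_\cU(\widetilde\pi_{v_1}(x_1)\ldots\widetilde\pi_{v_n}(x_n))=0$.

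With both conditions in hand, \cite[Proposition 2.22]{CasFim} produces the state preserving $\ast$-homomorphism $\pi_\Gamma\colon\bM_\Gamma\to\bB$ extending the $\widetilde\pi_v$; as the ambient and ultraproduct states here are traces, $\pi_\Gamma$ is trace preserving. Finally, a trace preserving $\ast$-homomorphism between tracial von Neumann algebras is automatically injective: if $\pi_\Gamma(x)=0$ then $\tau_\Gamma(x^\ast x)=\tau_\cU(\pi_\Gamma(x)^\ast\pi_\Gamma(x))=0$, so $x=0$ by faithfulness of $\tau_\Gamma$. Thus $\pi_\Gamma$ is the required trace preserving embedding $\bM_\Gamma\to\prod_{i,\cU}[\bA_{\Gamma,i},\tau_{\Gamma,i}]$.
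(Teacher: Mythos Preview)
Your proof is correct and follows the same overall strategy as the paper: verify the hypotheses of the universal property \cite[Proposition~2.22]{CasFim} for the maps $\widetilde\pi_v$, using recentring of representing sequences to obtain the vanishing of mixed moments along reduced words. The paper, however, carries this out in the Raynaud ultraproduct framework: it builds the vertex inclusions by dualizing predual maps $\prod_{i,\cU}(\bA_{\Gamma,i})_\ast\to\prod_{i,\cU}(\bA_{v,i})_\ast$, then cuts down by the support projection $f$ of $(\tau_{\Gamma,i})^\bullet$, invoking Lemma~\ref{Lem=CEVSupport} to ensure $f$ commutes with each vertex copy so that $x\mapsto fj_v(x)f$ is multiplicative; the mixed-moment computation then requires a Kaplansky density approximation before recentring. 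Your argument bypasses all of this by working directly in the tracial (Ocneanu) ultraproduct, where every element is already represented by a bounded sequence and the entrywise inclusions $\bA_{v,i}\hookrightarrow\bA_{\Gamma,i}$ pass to the quotient because they are $\Vert\cdot\Vert_2$-isometric. This is a genuine simplification in the tracial setting and makes Lemma~\ref{Lem=CEVSupport} unnecessary for this lemma; the paper's route, on the other hand, is written so as to adapt to non-tracial state-preserving situations.
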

\begin{proof}
Let $j_{v,i}: \bM_{v,i} \rightarrow \bM_{\Gamma,i}$ be the natural trace preserving  embedding. The predual maps $(j_{v,i})_\ast: (\bM_{\Gamma,i})_\ast \rightarrow (\bM_{v,i})_{\ast}$ are contractive and hence induce a mapping in the ultraproduct $(j_v)_\ast:  \prod_{i, \cU} (\bM_{\Gamma,i})_\ast \rightarrow \prod_{i, \cU} (\bM_{v,i})_{\ast}$. We let
\[
j_v: \prod_{i, \cU} \bM_{v, i} \rightarrow \prod_{i, \cU} \bM_{\Gamma,i}
\]
be the dual of this mapping. Note that $\prod_{i, \cU} \bM_{v,i}$ is the dual of $\prod_{i, \cU} (\bM_{v,i})_\ast$ via the pairing $\langle (x_i)^\bullet, (\omega_i)^\bullet \rangle = \lim_{i, \cU} \omega_i(x_i)$ and therefore explicitly $j_v( (x_i)^\bullet ) = (j_{v,i}(x_i) )^\bullet$.

Let $e_v, v \in V\Gamma$ be the support projection of the ultraproduct state $(\tau_{v,i})^\bullet$. Let $f$ be the support projection of the ultraproduct state $(\tau_{\Gamma, i})^\bullet$. Note that $j_v$ identifies $\prod_{i, \cU} \bM_{v, i}$ as a subalgebra of $\prod_{i, \cU} \bM_{\Gamma, i}$ and $(\tau_{\Gamma, i})^\bullet$ restricts to $(\tau_{v,i})^\bullet$ \cite{CasFim}.

Recall that $e_v \prod_{i, \cU} \bM_{v,i}  e_v = \prod_{i, \cU} [\bM_{v,i}, \tau_{v,i}]$ and similarly $f \prod_{i, \cU} \bM_{\Gamma,i}  f = \prod_{i, \cU} [\bM_{\Gamma,i}, \tau_{\Gamma,i}]$. Set $\rho_v: \prod_{i, \cU} [\bM_{v,i}, \tau_{v,i}] \rightarrow \prod_{i, \cU} [\bM_{\Gamma,i}, \tau_{\Gamma,i}]$ by defining $\rho_v(e_v x e_v) = f j_v(e_v x e_v) f$, where $x \in \prod_{i, \cU} \bM_{v,i}$. By Lemma \ref{Lem=CEVSupport} $f$ commutes with the image of $j_v$, from which we conclude that $\rho_v$ is a $\ast$-homomorphism. Set $\alpha_v = \rho_v \circ \pi_v$. Note that $\alpha_v$ is faithful: indeed let $0 \leq x \in \bM_v$ be non-zero. Then $(\tau_{\Gamma,i})^\bullet (\alpha_v(x)) = \tau_v(x) \not = 0$. Hence $\alpha_v(x) \not = 0$.

We shall now verify the universal property \cite[Proposition 2.22]{CasFim}. Let $(v,w) \in E\Gamma$. For $x = (x_i)^\bullet$ and $y = (y_i)^\bullet$ in respectively $\prod_{i, \cU} \bM_{v,i}$ and $\prod_{i, \cU} \bM_{w,i}$ we have since $x_i y_i = y_i x_i$:
\[
\begin{split}
&\rho_v(e_v x e_v) \rho_{v'}(e_{v'} x e_{v'}) = f j_v(e_v x e_v) f j_w(e_w y e_w) f = f j_v(e_v x e_v)  j_w(e_w y e_w) f \\
= &
f (j_{v,i}( e_{v,i} x_i e_{v,i}) )^\bullet (j_{w,i}( e_{w,i} y_i e_{w,i}) )^\bullet f =
f  (j_{w,i}( e_{w,i} y_i e_{w,i}) )^\bullet (j_{v,i}( e_{v,i} x_i e_{v,i}) )^\bullet f,
\end{split}
\]
and the latter expression equals $\rho_v(e_v x e_v) \rho_{v'}(e_{v'} x e_{v'})$. So the images of $\rho_v$ and $\rho_w$ commute and hence the images of $\alpha_v$ and $\alpha_w$ commute.

Next, let $\boldv = v_1 \ldots v_n$ be a reduced word. For $1 \leq k \leq n$ let $a_k \in \bM_{v_k}^\circ$. Since $e_{v_k} \left( \prod_{i, \cU} \bA_{v_k, i} \right) e_{v_k}$ equals $\prod_{i, \cU} [\bA_{v_k,i}, \tau_{v_k, i}]$ we may approximate $\pi_{v_k}(a_k) \in \prod_{i, \cU} [\bA_{v_k, i}, \tau_{v_k, i} ]$ in the strong topology with a bounded net $(a_{k, i, s})^\bullet_{s \in S}$ where $a_{k, i, s} \in \bA_{v_k, i}$ (by Kaplansky's density theorem). Since $\pi_{v_k}$ is trace preserving it follows that $\lim_{s \in S} \lim_{i, \cU} \tau_{v_k, i}(a_{k, i, s}) = 0$. Therefore we may replace $a_{k, i, s}$ by $a_{k, i, s}^\circ := a_{k, i, s} - \tau_{v_k, i}(a_{k, i, s}) \in \bA_{v_k, i}^\circ$ and still have $(a_{k, i, s}^\circ)^\bullet_{s \in S} \rightarrow \pi_{v_k}(a_k)$ strongly. Then,
\[
\begin{split}
& \tau_{\Gamma}( \alpha_{v_1}(a_1) \ldots \alpha_{v_n}(a_n)  ) \\
= &\lim_{s \in S} \tau_\Gamma\left( f ( j_{v_1,i}( a^\circ_{1, i, s} )   )^\bullet  f \alpha_{v_2}(a_2) \ldots \alpha_{v_n}(a_n) \right) \\
= &\lim_{s_1 \in S_1} \ldots \lim_{s_n \in S_n} \tau_\Gamma\left( f ( j_{v_1,i}( a^\circ_{1, i, s_1} )   )^\bullet  f  \ldots  f ( j_{v_n,i}( a^\circ_{n, i, s_n} )   )^\bullet  f \right) \\
= &\lim_{s_1 \in S_1} \ldots \lim_{s_n \in S_n} \lim_{i, \cU} \tau_{\Gamma,i} \left(  j_{v_1,i}( a^\circ_{1, i, s_1} )       \ldots     j_{v_n,i}( a^\circ_{n, i, s_n} )    \right),
\end{split}
\]
which equals zero as $\tau_{\Gamma,i} \left(  j_{v_1,i}( a^\circ_{1, i, s_1} )       \ldots     j_{v_n,i}( a^\circ_{n, i, s_n} )    \right) = 0$. Hence we may apply \cite[Proposition 2.22]{CasFim} which concludes that there exists a trace preserving embedding $\pi_{\Gamma}: \bM_{\Gamma} \rightarrow \prod_{i, \cU} [\bA_{\Gamma,i}, \tau_{\Gamma,i}]$.
\end{proof}

\begin{lem}\label{Lem=MatrixFactor}
Let $\bM$ be a type II$_1$ factor with normal  faithful tracial state $\tau$. Consider $M_n(\mathbb{C})$ with normalized trace.  There exists a trace preserving embedding $\varphi_n: M_n(\mathbb{C}) \rightarrow \bM$.
\end{lem}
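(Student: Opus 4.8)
The plan is to realize $M_n(\mathbb{C})$ inside $\bM$ by constructing an explicit system of matrix units. First I would use the structure theory of type II$_1$ factors: since $\tau$ is the unique tracial state and $\bM$ is diffuse, the range of $\tau$ on projections is all of $[0,1]$, and comparison theory applies (see \cite{Tak}). Exploiting this I construct mutually orthogonal projections $e_1, \ldots, e_n \in \bM$ with $\sum_{k=1}^n e_k = 1$ and $\tau(e_k) = 1/n$ for every $k$. Concretely one proceeds inductively: choose $e_1$ with $\tau(e_1) = 1/n$; then $(1-e_1)\bM(1-e_1)$ is again a II$_1$ factor, in which one selects $e_2 \leq 1-e_1$ of trace $1/n$ (relative trace $1/(n-1)$), and so on until the identity is exhausted.

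Since $\bM$ is a factor, two projections are Murray--von Neumann equivalent exactly when they have the same trace, so all the $e_k$ are equivalent to $e_1$. I would then choose partial isometries $u_k \in \bM$ with $u_k^\ast u_k = e_1$ and $u_k u_k^\ast = e_k$, normalized so that $u_1 = e_1$, and set $e_{kl} := u_k u_l^\ast$. A direct computation using $e_k e_l = \delta_{kl} e_k$ shows that the $e_{kl}$ form a system of $n \times n$ matrix units: $e_{kl}^\ast = e_{lk}$, $\ e_{kl} e_{pq} = \delta_{lp}\, e_{kq}$, and $\sum_k e_{kk} = \sum_k e_k = 1$.

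Now I define $\varphi_n \colon M_n(\mathbb{C}) \to \bM$ on the standard matrix units by $\varphi_n(E_{kl}) = e_{kl}$ and extend linearly; the matrix unit relations make $\varphi_n$ a unital $\ast$-homomorphism. For trace preservation I compute $\tau(e_{kl})$: when $k = l$ one has $\tau(e_{kk}) = \tau(e_k) = 1/n$, while for $k \neq l$ the trace property gives $\tau(e_{kl}) = \tau(u_k u_l^\ast) = \tau(u_l^\ast u_k) = 0$, since $u_l^\ast u_k = e_1 u_l^\ast (e_l e_k) u_k e_1 = 0$ because $e_l e_k = 0$. Hence $\tau(\varphi_n(E_{kl})) = \tfrac{1}{n}\delta_{kl}$, which is precisely the normalized trace of $E_{kl}$, so $\varphi_n$ intertwines the traces. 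Injectivity is then automatic: $M_n(\mathbb{C})$ is simple and $\varphi_n$ is nonzero, so $\varphi_n$ is injective (equivalently, trace-preservation together with faithfulness of $\tau$ forces injectivity).

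The only genuine content is the first step, namely producing the partition of the identity into $n$ equivalent projections of equal trace; this rests on diffuseness of $\bM$, the full range and uniqueness of the trace, and the comparison theory of projections in a factor. Everything afterwards is the standard matrix-unit bookkeeping and presents no difficulty.
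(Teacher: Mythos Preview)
Your argument is correct and follows essentially the same approach as the paper: both construct $n$ mutually orthogonal projections of trace $1/n$ and use Murray--von Neumann equivalence in the factor to obtain partial isometries yielding a system of matrix units. Your version is in fact more careful, since by first fixing $u_k$ with $u_k^\ast u_k = e_1$, $u_k u_k^\ast = e_k$ and then setting $e_{kl} = u_k u_l^\ast$ you guarantee the relations $e_{kl} e_{pq} = \delta_{lp} e_{kq}$, whereas the paper simply asserts the existence of $u_{i,j}$ and leaves the compatibility implicit.
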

\begin{proof}
Let $p_1, \ldots, p_n$ be $n$ mutually orthogonal projections in $\bM$ with $\tau(p_n) = n^{-1}$. Since $\cM$ is a type II factor let $u_{i,j}, i \not = j$ be partial isometries with $u_{i,j} u_{i,j}^\ast = p_{i}, u_{i,j}^\ast u_{i,j} = p_j$. Put $u_{i,i} = p_i$. Let $e_{i,j}$ be the matrix units of $M_n(\mathbb{C})$. Then extending $\varphi_n: e_{i,j} \mapsto u_{i,j}$ linearly gives the required mapping.
\end{proof}

\begin{thm}\label{Thm=GraphQWEP}
Let $\Gamma$ be a countable simplicial graph. For every $v \in V\Gamma$ let $\bM_v$ be a II$_1$ factor with normal faithful tracial state $\tau_{v}$. The graph product $\bM_\Gamma$ is Connes embeddable if and only if for every $v \in V\Gamma, \bM_v$ is Connes embeddable.
\end{thm}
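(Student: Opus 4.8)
The plan is to prove the two implications separately; the forward implication is immediate, while the reverse one assembles the machinery built in the previous sections.

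For the ``only if'' direction, suppose $\bM_\Gamma$ is Connes embeddable, so that there is a trace preserving embedding $\bM_\Gamma \hookrightarrow \cR^\cU$. Each $\bM_v$ sits inside $\bM_\Gamma$ as a von Neumann subalgebra on which the graph product state $\tau_\Gamma$ restricts to $\tau_v$, so restricting the embedding to $\bM_v$ yields a trace preserving embedding $\bM_v \hookrightarrow \cR^\cU$. Hence every $\bM_v$ is Connes embeddable, and this direction needs no further work.

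For the ``if'' direction I would first treat finite $\Gamma$. Since each $\bM_v$ is Connes embeddable there are trace preserving embeddings $\pi_v\colon \bM_v \hookrightarrow \cR^\cU = \prod_{i,\cU}[\cR,\tau]$, which places us exactly in the setting of Lemma \ref{Lem=PointGraphProduct} with $\bA_{v,i} := \cR$ for all $i$. That lemma then produces a trace preserving embedding
\[
\pi_\Gamma\colon \bM_\Gamma \hookrightarrow \prod_{i,\cU}[\cR_\Gamma, \tau_\Gamma],
\]
where $\cR_\Gamma$ denotes the graph product over $\Gamma$ of copies of the hyperfinite factor $\cR$. Thus everything is reduced to showing that $\cR_\Gamma$ is Connes embeddable: once this is known, the embedding $\cR_\Gamma \hookrightarrow \cR^\cU$ induces $\prod_{i,\cU}[\cR_\Gamma,\tau_\Gamma] \hookrightarrow (\cR^\cU)^\cU \cong \cR^\cW$ for a suitable ultrafilter $\cW$ (iterated ultrapowers of $\cR$ are again of the form $\cR^\cW$), whence $\bM_\Gamma \hookrightarrow \cR^\cW$.

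To embed $\cR_\Gamma$ I would compare it with the graph product free Araki--Woods factor $\bP_\Gamma$ of Section \ref{Sect=Gaussian}. Using Lemma \ref{Lem=MatrixFactor} I build, for each $v$, a trace preserving embedding $\cR \hookrightarrow \bP_v$: the lemma supplies trace preserving copies $M_{2^n}(\bc) \hookrightarrow \bP_v$, which I arrange into an increasing tower whose weak closure inside the II$_1$ factor $\bP_v$ is a trace preserving copy of $\cR = \overline{\bigcup_n M_{2^n}(\bc)}$. The images of distinct $\bP_v$ commute along edges of $\Gamma$, so the composites $\cR \hookrightarrow \bP_v \subseteq \bP_\Gamma$ satisfy the hypotheses of the universal property of the graph product \cite[Proposition 2.22]{CasFim}; this yields a trace preserving, hence faithful, embedding $\cR_\Gamma \hookrightarrow \bP_\Gamma$. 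Since $\bP_\Gamma$ is Connes embeddable by Theorem \ref{Thm=PGammaIsEmbeddable}, so is $\cR_\Gamma$, and the reduction above gives $\bM_\Gamma \hookrightarrow \cR^\cW$. Finally, for countable $\Gamma$ I would write $\bM_\Gamma$ as the weak closure of the increasing union of the $\bM_F$ over finite full subgraphs $F \subseteq \Gamma$ and pass to the limit, using that a separable inductive limit of Connes embeddable factors is again Connes embeddable.

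The genuinely hard analytic content, namely the mixed moment computation and the random matrix model, is already packaged in Theorem \ref{Thm=PGammaIsEmbeddable}, so the main obstacle here is conceptual bookkeeping: checking that the functoriality of the graph product under trace preserving inclusions really produces the embedding $\cR_\Gamma \hookrightarrow \bP_\Gamma$ from the universal property, and verifying the ultrafilter manipulations, in particular that a single ultrafilter $\cU$ on $\mathbb{N}$ may be used simultaneously for all vertices and that the resulting iterated ultraproduct of $\cR$ again embeds into some $\cR^\cW$.
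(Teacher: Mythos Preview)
Your proposal is correct and follows essentially the same route as the paper: reduce via Lemma~\ref{Lem=PointGraphProduct} to an ultraproduct of graph products of fixed vertex algebras, embed those vertex algebras into the free Araki--Woods factor via Lemma~\ref{Lem=MatrixFactor}, use functoriality of the graph product to land in $\bP_\Gamma$, and finish with Theorem~\ref{Thm=PGammaIsEmbeddable} and an inductive-limit argument for infinite $\Gamma$. The only cosmetic difference is that the paper takes the vertex algebras $\bA_{v,i}$ to be matrix algebras (so Lemma~\ref{Lem=MatrixFactor} applies verbatim) and cites \cite[Lemma~7.14]{Jun} for the ultraproduct step, whereas you take $\bA_{v,i}=\cR$, build the tower $M_{2^n}(\bc)\subset\bP_v$ by hand, and phrase the last step as an iterated ultrapower; both variants work.
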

\begin{proof}
Assume that $\Gamma$ is finite. The only if part is trivial as $\bM_v$ is a subalgebra of $\bM_\Gamma$. Conversely, for every $v \in V\Gamma$ let $\pi_v: \bM_v \rightarrow \prod_{i, \cU} [\bA_{v,i}, \tau_{v,i}]$ be an embedding into an ultraproduct of matrix algebras $\bA_{v,i}$ with tracial states $\tau_{v,i}$. Lemma \ref{Lem=PointGraphProduct} yields an embedding $\bM_\Gamma \rightarrow \prod_{i, \cU} [\bA_{\Gamma,i}, \tau_{\Gamma,i}]$.
Let $\bP_{v,i}$ be a copy of $\bP$ with vacuum state $\tau_{v,i}$. $\bP_{v,i}$ is a II$_1$-factor by Theorem \ref{Thm=GaussianFactor}. Let $\bP_{\Gamma,i}$ be the graph product of these  factors. By Lemma \ref{Lem=MatrixFactor} each $\bA_{v,i}$ is an expected subalgebra of $\bP_{v,i}$ and so $\bA_{\Gamma,i}$ is an expected subalgebra of $\bP_{\Gamma,i}$. Hence we must prove that $\prod_{i, \cU} [\bP_{\Gamma,i}, \tau_{\Gamma, i}]$ is Connes embeddable. In turn by \cite[Lemma 7.14]{Jun} it suffices to prove that each $\bP_{\Gamma,i}$ is Connes embeddable. The latter is Theorem \ref{Thm=PGammaIsEmbeddable}. For infinite $\Gamma$ the result follows from an inductive limit argument, c.f. the proof of \cite[Corollary 2.17]{CasFim}.
\end{proof}

\end{document}